\definecolor{rossred}{rgb}{1.0,0.25,0.66}  
\definecolor{rossgreen}{rgb}{0.25,0.66,0.25} 
\definecolor{rossblue}{rgb}{0.25,0.66,1.0}
\definecolor{sashapurple}{rgb}{0.5,0.15,0.5}
\numberwithin{equation}{section}
\newcommand{\KK}{{\mathbb{K}}}
\newcommand{\ZZ}{{\mathbb{Z}}}
  \newcommand{\C}{{\mathcal{C}}}
  \newcommand{\N}{{\mathcal{N}}}
\theoremstyle{plain}
\newtheorem{theorem}{Theorem}[section]
\newtheorem{lemma}[theorem]{Lemma}
\newtheorem{corollary}[theorem]{Corollary}
\theoremstyle{definition}
\newtheorem{defn}[theorem]{Definition}
\newtheorem{remark}[theorem]{Remark}
\newtheorem{example}[theorem]{Example}
\title{The $h$-vectors of toric ideals of odd cycle compositions revisited}
\author{Kieran Bhaskara}
\address[K. Bhaskara]
{Department of Mathematics and Statistics
McMaster University, Hamilton, ON L8S 4L8, Canada}
\email{kieran.bhaskara@mcmaster.ca}
\author{Adam Van Tuyl}
\address[A. Van Tuyl]
{Department of Mathematics and Statistics
McMaster University, Hamilton, ON L8S 4L8, Canada}
\email{vantuyla@mcmaster.ca}
\author{Sasha Zotine}
\address[S. Zotine]
{Department of Mathematics and Statistics
McMaster University, Hamilton, ON L8S 4L8, Canada}
\email{zotinea@mcmaster.ca}
\keywords{geometrically vertex decomposable, toric ideal of graph, 
$h$-polynomial}
\subjclass[2020]{13D02, 13D40, 13P10, 13F65, 14M25, 05E40}
\date{\today}
\begin{document}
\begin{abstract}
    Let $G$ be a graph consisting of $s$ odd cycles that all share a common
    vertex.  Bhaskara, Higashitani, and Shibu Deepthi recently computed
    the $h$-polynomial for the quotient ring $R/I_G$, where $I_G$ is
    the toric ideal of $G$, in terms of the number and sizes of odd cycles in the graph.
    The purpose of this note is to prove the stronger result that these toric ideals are 
    geometrically vertex decomposable, which allows us to deduce
    the result of Bhaskara, Higashitani, and Shibu Deepthi about the
    $h$-polyhomial as a corollary.
    \end{abstract}

\maketitle

\section{Introduction}
\label{sec:intro}

Toric ideals of graphs provide a fruitful bridge between combinatorics
and commutative algebra.   Recall that if
$G = (V,E)$ is a finite simple graph
on the vertex set $V = \{x_1,\ldots,x_n\}$ and edge set $E = \{e_1,\ldots,e_m\}$,
then the {\it toric ideal} of $G$ is the kernel of
the $\mathbb{K}$-algebra homomorphism 
$\varphi: \mathbb{K}[e_1,\ldots,e_m] \rightarrow \mathbb{K}[x_1,\ldots,x_n]$
given by $\varphi(e_i) = x_jx_k$ if $e_i = \{x_j,x_k\}$.  We denote
${\rm ker}(\varphi)$ by $I_G$ and write $\mathbb{K}[G]$ for
$\mathbb{K}[e_1,\ldots,e_m]/I_G$, which is sometimes
referred to as the {\it toric edge ring} of $G$.  Studying how the 
combinatorics of $G$ are reflected in $\mathbb{K}[G]$, 
and vice-versa, is an active area of research; see \cite{ADS, BOKVT2017,
CDSRVT2023, HHBOK2019,HHKOK2011,V} for a small
sample of this work.

A recent example of this interaction is work of  
Bhaskara-Higashitani-Shibu Deepthi \cite{BHSD2023edgerings}, generalizing 
earlier work of
Higashitani-Shibu Deepthi \cite{HD2023}, on the $h$-polynomial of
the toric edge rings of the family of graphs consisting of odd cycles 
that share a common vertex.   More precisely, fix an
integer $s \geq 1$ and let $t_1,\ldots,t_s$ be positive integers.  
Let $G = G(t_1,\ldots,t_s)$ be the graph consisting of $s$ odd cycles of
size $2t_i+1$ for $i=1,\ldots,s$, where all the odd cycles share a common
``center'' vertex.   An example of the graph $G(2,1,2)$ is given
in Figure \ref{fig.g(2,1,2)}.   
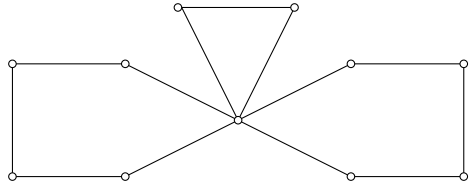
\begin{figure}[htp!]
\begin{tikzpicture}[scale=0.5]
\draw (6,0) -- (6,3) -- (9,3) -- (12,1.5)  -- (9,0) -- (6,0);
\draw (12,1.5) -- (15,3) -- (18,3) -- (18,0) -- (15,0) -- (12,1.5);
\draw (12,1.5) -- (10.5,4.5) -- (13.5,4.5) -- (12,1.5);

\fill[fill=white,draw=black] (6,0) circle (.1)  {};
\fill[fill=white,draw=black] (9,0) circle (.1)  {};
\fill[fill=white,draw=black] (6,3) circle (.1)  {};
\fill[fill=white,draw=black] (9,3) circle (.1)  {};
\fill[fill=white,draw=black] (15,0) circle (.1) {};
\fill[fill=white,draw=black] (15,3) circle (.1)  {};
\fill[fill=white,draw=black] (18,0) circle (.1) {};
\fill[fill=white,draw=black] (18,3) circle (.1) {};
\fill[fill=white,draw=black] (10.4,4.5) circle (.1)  {};
\fill[fill=white,draw=black] (12,1.5) circle (.1) {};
\fill[fill=white,draw=black] (13.5,4.5) circle (.1) {};
\end{tikzpicture}
    \caption{The graph $G(2,1,2)$ consisting of two $2\cdot 2+1 = 5$-cycles
    and one $2\cdot 1 + 1 =3$-cycle meeting at a common vertex}
    \label{fig.g(2,1,2)}
\end{figure}
The main
result of \cite{BHSD2023edgerings} is to compute
the $h$-polynomial of $\mathbb{K}[G(t_1,\ldots,t_s)]$,
that is, the numerator of the reduced Hilbert series 
of $\mathbb{K}[G(t_1,\ldots,t_s)]$ (see the next
section or \cite{BHSD2023edgerings} for any undefined terms).
Note that when $t_1=\cdots=t_s =1$, the graph $G(1,\ldots,1)$ consists of $s$ triangles that share a common vertex---this  family is the one studied in \cite{HD2023}.

The proof strategy in both \cite{BHSD2023edgerings} and
\cite{HD2023} is to consider the initial
ideal of the toric ideal of $G(t_1,\ldots,t_s)$. 
For this family of graphs, the initial ideal is a squarefree
monomial ideal and thus corresponds
to a simplicial complex via the Stanley--Reisner
correspondence. Through a careful study of 
the facets of this simplicial complex, the authors of \cite{BHSD2023edgerings,HD2023} construct a recursive 
formula for the Hilbert series, allowing them to deduce
their main result. 

The purpose of this article is to give an entirely
different proof that avoids passing to the initial
ideal of $I_G$.  In fact, we prove that
the ideals $I_G$ satisfy a stronger algebraic
property:

\begin{theorem}\label{maintheorem}
    The toric ideal $I_G$ of $G = G(t_1,\ldots,t_s)$ is geometrically vertex decomposable.
\end{theorem}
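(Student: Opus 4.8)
\emph{Step 1: recast $I_G$ as a determinantal ideal.} Let $c$ be the common vertex, label the edges of the $i$th cycle in cyclic order as $e_{i,0},e_{i,1},\ldots,e_{i,2t_i}$ (so $e_{i,0}$ and $e_{i,2t_i}$ meet at $c$), and set $a_i=e_{i,0}e_{i,2}\cdots e_{i,2t_i}$ and $b_i=e_{i,1}e_{i,3}\cdots e_{i,2t_i-1}$, the products of the even- and odd-indexed edges of the $i$th cycle. A direct check shows that the even closed walk traversing cycle $i$ and then cycle $j$ yields the binomial $B_{ij}=a_ib_j-a_jb_i\in I_G$, and since the lattice spanned by the vectors $u_i$ ($+1$ on the even edges of cycle $i$, $-1$ on the odd edges) is saturated of rank $s-1$ and equals $\ker$ of the incidence matrix, these binomials in fact generate:
\[
 I_G \;=\; I_2(M), \qquad M=\begin{pmatrix} a_1 & \cdots & a_s \\ b_1 & \cdots & b_s \end{pmatrix}.
\]
Here distinct columns involve disjoint variables and $\gcd(a_i,b_i)=1$. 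My first task is therefore to confirm this presentation rigorously.

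\emph{Step 2: the main induction.} I would prove by induction on $s$ that $I_2(M)$ is geometrically vertex decomposable. If $s=1$ then $G$ is a single odd cycle and $I_G=(0)$, which is geometrically vertex decomposable. For $s\ge 2$ pick any variable $y\mid b_s$ and a $y$-compatible term order. The crucial observation is that $\{B_{ij}\}$ is already a Gr\"obner basis for such an order: the only nontrivial $S$-pairs are the Pl\"ucker ones, and $b_jB_{is}-b_iB_{js}=b_sB_{ij}$ reduces to $0$, while all remaining $S$-pairs have coprime leading terms by the disjointness of columns and reduce to $0$ by Buchberger's criterion. Hence $I_G$ admits a geometric vertex decomposition along $y$, and (writing $b_s'=b_s/y$) its two pieces are
\[
 N_{y,I_G}=(B_{ij}: 1\le i<j\le s-1)=I_{G(t_1,\ldots,t_{s-1})}, \qquad C_{y,I_G}=N_{y,I_G}+\big(b_s'a_1,\ldots,b_s'a_{s-1}\big).
\]
Geometrically $N_{y,I_G}$ is the toric ideal of the graph obtained by deleting $y$, which opens the last cycle into a path and leaves the remaining $s-1$ cycles; it is the toric ideal of $G(t_1,\ldots,t_{s-1})$ (the extra path edges being free variables) and so is geometrically vertex decomposable by the inductive hypothesis.

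\emph{Step 3: the contracted ideal.} It remains to show $C_{y,I_G}$ is geometrically vertex decomposable. It has the shape $I_2(M')+m\cdot(a_1,\ldots,a_{s-1})$, where $M'$ consists of the first $s-1$ columns and $m=b_s'$ is a squarefree monomial in the odd edges of the last cycle, disjoint from everything in $M'$. I would treat such ideals by an auxiliary induction on $\deg m$: choosing $z\mid m$ and a $z$-compatible order, the minors of $I_2(M')$ are $z$-free while each monomial $ma_i=z\,(m/z)a_i$ contributes the leading coefficient $(m/z)a_i$ and no trailing term, and (again by coprimality) these generators form a Gr\"obner basis. The resulting decomposition has $N_z=I_2(M')$, geometrically vertex decomposable by the main induction, and $C_z=I_2(M')+(m/z)(a_1,\ldots,a_{s-1})$, of the same shape with $m$ replaced by $m/z$. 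When $m=1$ the minors vanish modulo the $a_i$ and the ideal collapses to $(a_1,\ldots,a_{s-1})$, a complete intersection of monomials in disjoint variables, which is geometrically vertex decomposable by the same peeling one variable at a time.

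\emph{Main obstacle.} The delicate point is not the Gr\"obner-basis computations---these are forced by the disjoint-support structure---but verifying the unmixedness hypotheses needed at every node of the recursion, since geometric vertex decomposability requires each ideal in the tree to be unmixed. The hybrid ideals $I_2(M')+m(a_1,\ldots,a_{s-1})$ are neither prime nor monomial, and I expect the main work to be proving they are unmixed of height $s-1$; I would do this either by reading off their minimal primes from the combinatorics or by proving Cohen--Macaulayness in tandem with the decomposition. A secondary point to nail down is Step 1, namely that the two-cycle binomials already generate $I_G$ and no higher primitive even closed walk is required.
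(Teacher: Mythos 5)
Your proposal is correct and takes essentially the same route as the paper's own proof: the same induction on $s$, the same choice of $y$ among the odd-indexed edges of the last cycle, the same pieces $\N = I_{G(t_1,\ldots,t_{s-1})}$ and $\C = \N + b_s'\,(a_1,\ldots,a_{s-1})$, the same one-variable-at-a-time peeling of $b_s'$, and the same terminal squarefree monomial complete intersection $(a_1,\ldots,a_{s-1})$, which the paper handles with Lemma~\ref{lem.CIGVD}. The paper discharges your Step 1 and all of your Gr\"obner-basis claims in one stroke by citing the universal Gr\"obner basis coming from primitive closed even walks (Lemma~\ref{lemma:univGB}, via \cite[Proposition 10.1.10]{V}) together with Lemma~\ref{lem:sqfreelemma}, and it spends no more effort than you do on the unmixedness of the intermediate hybrid ideals---it verifies unmixedness only for $I_G$ itself (via primality) and relies on the complete-intersection structure at the end of the chain.
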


\noindent
Geometrically vertex decomposable ideals were
introduced by Klein and Rajchgot \cite{KR2021},
extending  work of Knutson--Miller--Yong \cite{KMY2009} on vertex decomposability.  
Very roughly speaking, geometrically vertex 
decomposable ideals generalize the properties of
Stanley--Reisner ideals of vertex decomposable
simplicial complexes to all ideals.  As shown
in \cite{CDSRVT2023, DH2023,  NRVT2024gvdideals}, geometrically vertex decomposable
ideals are a useful addition to
the commutative algebra toolkit.

Theorem~\ref{maintheorem} also demonstrates the
advantage of identifying geometrically vertex 
decomposable ideals.  Indeed, using the properties of the Hilbert series of quotients by geometrically vertex decomposable ideals developed by 
Nguy{\~{\^e}}n--Rajchgot--Van Tuyl \cite{NRVT2024gvdideals}, the main results
of \cite{BHSD2023edgerings,HD2023} are now
corollaries of Theorem~\ref{maintheorem}.

\begin{corollary}[{\cite[Theorem~1.1]{BHSD2023edgerings}}]\label{maincor}
    The h-polynomial of $\KK[G(t_1,\ldots,t_s)]$ is
    \begin{equation}
        h \bigl( \KK[G(t_1,\ldots,t_s)] ; z\bigr) = \prod_{i=1}^s (1 + z + z^2 + \cdots + z^{t_i}) - z \prod_{i=1}^s (1 + z + z^2 + \cdots + z^{t_i - 1}).\label{main formula}
    \end{equation}
    
\end{corollary}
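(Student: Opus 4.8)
The plan is to establish Corollary~\ref{maincor} from Theorem~\ref{maintheorem} by propagating the $h$-polynomial through the geometric vertex decompositions that witness the geometric vertex decomposability of $I_G$, using the Hilbert-series results of \cite{NRVT2024gvdideals}. Write $R = \KK[e_1,\dots,e_m]$, so that $\KK[G] = R/I_G$, and for each odd cycle of $G$ let $P_i$ and $Q_i$ be the products of its odd- and even-indexed edges, so that $I_G$ is generated by the binomials $P_iQ_j - P_jQ_i$. The engine is a transfer formula for a nondegenerate geometric vertex decomposition of a homogeneous ideal $I$ with respect to a variable $y$: provided $\dim R/N_{y,I} = \dim R/I + 1$ and $\dim R/C_{y,I} = \dim R/I$, one has
\[ h(R/I;z) = h(R/N_{y,I};z) + z\,h(R/C_{y,I};z). \]
This is a consequence of the defining identity $\operatorname{in}_y(I) = C_{y,I}\cap(N_{y,I}+\langle y\rangle)$ and the containment $N_{y,I}\subseteq C_{y,I}$, both ideals living in the polynomial ring on the remaining variables, once the two dimension equalities hold. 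Since adjoining free variables leaves an $h$-polynomial unchanged, I may compute in whichever ring is convenient.

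First I would extract from the proof of Theorem~\ref{maintheorem} the decomposition at the outer level. Taking $y$ to be a non-center edge of the $s$-th cycle, i.e.\ a factor of $Q_s$, gives $N_{y,I_G} = I_{G(t_1,\dots,t_{s-1})}$, the toric ideal of $G$ with its $s$-th cycle deleted, and $C_{y,I_G} = \langle P_1\widetilde{Q}_s,\dots,P_{s-1}\widetilde{Q}_s\rangle + I_{G(t_1,\dots,t_{s-1})}$, where $\widetilde{Q}_s = Q_s/y$. Setting $h_s := h(\KK[G(t_1,\dots,t_s)];z)$ and $A_i := 1 + z + \cdots + z^{t_i}$, the transfer formula reads $h_s = h_{s-1} + z\,h(R/C_{y,I_G};z)$, and it remains to compute the last summand.

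I would evaluate $h(R/C_{y,I_G};z)$ by iterating the same step, peeling the $t_s - 1$ remaining factors of $\widetilde{Q}_s$ one edge at a time. Each such decomposition again has $N = I_{G(t_1,\dots,t_{s-1})}$ and removes one factor from the monomial, so it contributes a further copy of $h_{s-1}$ carrying one more power of $z$. Once the monomial factor is used up one is left with $\langle P_1,\dots,P_{s-1}\rangle$; as the $P_i$ have pairwise disjoint supports they form a regular sequence, so this is a complete intersection of forms of degrees $t_i+1$, with $h$-polynomial $\prod_{i<s}A_i$. Collecting the resulting geometric series of $h_{s-1}$-terms produces
\[ h_s = \bigl(1 + z + \cdots + z^{t_s - 1}\bigr)\,h_{s-1} + z^{t_s}\prod_{i=1}^{s-1}A_i. \]
Writing $B_i := 1 + z + \cdots + z^{t_i - 1}$ and using $A_s = B_s + z^{t_s}$, an induction on $s$ with base case $s=1$ (where $I_{G(t_1)} = (0)$ and $h_1 = 1$) then gives $h_s = \prod_{i=1}^{s}A_i - z\prod_{i=1}^{s}B_i$, the desired formula.

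The step I expect to demand the most care is not any individual calculation but the verification, at every node of this recursion, of the hypotheses of the transfer formula: nondegeneracy together with the two dimension equalities, which amount to confirming that $N$ loses exactly one cycle while every ideal in the $C$-chain keeps codimension $s-1$. These codimension counts, and the fact that the minors $P_iQ_j - P_jQ_i$ form a Gr\"obner basis under a $y$-compatible term order (so that $N_{y,I_G}$ and $C_{y,I_G}$ have the stated generators), are exactly the data furnished by the proof of Theorem~\ref{maintheorem}, so I would read them off from there. The final input, that $\langle P_1,\dots,P_{s-1}\rangle$ is a monomial complete intersection with $h$-polynomial $\prod_{i<s}A_i$, is then routine.
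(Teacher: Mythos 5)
Your argument is correct and rests on the same machinery as the paper---induction on $s$, a chain of geometric vertex decompositions that peels one edge of the last cycle at a time, the $h$-polynomial transfer formula of \cite{NRVT2024gvdideals}, and a terminal squarefree monomial complete intersection---but it runs the decomposition in mirror image. The paper peels the \emph{odd}-indexed edges of cycle $s$ (the factors of your $P_s$): its chain has $t_s+1$ steps, its terminal ideal is $\langle Q_1,\ldots,Q_{s-1}\rangle$ with generators of degrees $t_i$, and its recursion is $h_s=(1+z+\cdots+z^{t_s})\,h_{s-1}+z^{t_s+1}\prod_{i<s}(1+\cdots+z^{t_i-1})$. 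You peel the \emph{even}-indexed edges (the factors of $Q_s$): $t_s$ steps, terminal ideal $\langle P_1,\ldots,P_{s-1}\rangle$ with generators of degrees $t_i+1$, and recursion $h_s=(1+\cdots+z^{t_s-1})\,h_{s-1}+z^{t_s}\prod_{i<s}(1+\cdots+z^{t_i})$. Both recursions collapse to Equation~\eqref{main formula}, and yours does so with slightly less rearranging, since you apply $1+\cdots+z^{t_s}=(1+\cdots+z^{t_s-1})+z^{t_s}$ at the outset instead of via the paper's mid-proof regrouping. The one caveat concerns your final paragraph: because your chain is \emph{not} the chain constructed in the paper's proof of Theorem~\ref{maintheorem}, the Gr\"obner-basis statements, the nondegeneracy checks, and---crucially---the fact that each intermediate ideal $\C_j$ in your chain is itself GVD (which Theorem~\ref{thm:hpolygvd} requires at every node) cannot literally be ``read off'' from that proof; you must rerun its argument for your mirrored chain. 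This is genuinely routine: the universal Gr\"obner basis of Lemma~\ref{lemma:univGB} together with Lemma~\ref{lem:sqfreelemma}, under a lex order placing the even edges of cycle $s$ first, gives the required intersection identities and propagates Gr\"obner bases down the chain, and GVD-ness of each $\C_j$ follows by the same downward induction terminating in Lemma~\ref{lem.CIGVD}---but it is parallel new work, not citation. Finally, note that Theorem~\ref{thm:hpolygvd} is stated with the hypotheses $\C_{y,I}\neq\langle 1\rangle$ and $\sqrt{\C_{y,I}}\neq\sqrt{\N_{y,I}}$ rather than your two dimension equalities (those equalities are the mechanism behind the formula, not its quoted hypotheses); in your chain the quoted hypotheses are immediate, since each $\C_j$ contains a monomial while $\N=I_{G(t_1,\ldots,t_{s-1})}$ is prime and contains none.
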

\noindent
As a second application, we 
compute the Castelnuovo-Mumford regularity
of $\mathbb{K}[G(t_1,\ldots,t_s)]$. 

\begin{corollary}
\label{secondcor}
The Castelnuovo-Mumford regularity of 
$\KK[G(t_1,\ldots,t_s)]$ is
$${\rm reg}(\KK[G(t_1,\ldots,t_s)]) =
\begin{cases}
t_1+\cdots +t_s & ~~\mbox{if $s \geq 2$} \\
0 & ~~\mbox{if $s=1$.}
\end{cases}
$$
\end{corollary}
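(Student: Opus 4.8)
The plan is to deduce the regularity from the degree of the $h$-polynomial computed in Corollary~\ref{maincor}, together with the Cohen--Macaulayness of $\KK[G]$. For a standard graded Cohen--Macaulay $\KK$-algebra $A$, the Castelnuovo--Mumford regularity is governed entirely by the $h$-polynomial: writing the reduced Hilbert series as $h(A;z)/(1-z)^{\dim A}$, the $a$-invariant is $a(A) = \deg h(A;z) - \dim A$, and for Cohen--Macaulay $A$ one has ${\rm reg}(A) = a(A) + \dim A = \deg h(A;z)$. Thus, once we know $\KK[G]$ is Cohen--Macaulay, the problem reduces to reading off the degree of the explicit polynomial in \eqref{main formula}.

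First I would establish that $\KK[G]$ is Cohen--Macaulay. Since $I_G$ is a toric ideal it is prime, and in particular unmixed. Because $I_G$ is geometrically vertex decomposable by Theorem~\ref{maintheorem}, and unmixed geometrically vertex decomposable ideals are Cohen--Macaulay \cite{KR2021}, it follows that $\KK[G] = R/I_G$ is Cohen--Macaulay. Alternatively, one can invoke normality: every cycle of $G(t_1,\ldots,t_s)$ lies in a single petal, so no two odd cycles of $G$ are vertex-disjoint; hence $G$ satisfies the odd cycle condition, $\KK[G]$ is normal, and Cohen--Macaulayness follows from Hochster's theorem.

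It then remains to compute $\deg h(\KK[G];z)$ from \eqref{main formula}. The first product $\prod_{i=1}^s(1 + z + \cdots + z^{t_i})$ has degree $t_1 + \cdots + t_s$ with leading coefficient $1$, while the subtracted term $z\prod_{i=1}^s(1 + z + \cdots + z^{t_i-1})$ has degree $(t_1 - 1) + \cdots + (t_s - 1) + 1 = t_1 + \cdots + t_s - s + 1$. When $s \geq 2$ this second degree is strictly smaller than $t_1 + \cdots + t_s$, so no cancellation occurs in the top degree and $\deg h = t_1 + \cdots + t_s$. When $s = 1$ the two products telescope: $(1 + z + \cdots + z^{t_1}) - (z + z^2 + \cdots + z^{t_1}) = 1$, so $\deg h = 0$. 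Combining with ${\rm reg}(\KK[G]) = \deg h(\KK[G];z)$ yields the stated formula.

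I expect the only real obstacle to be pinning down the Cohen--Macaulay hypothesis, since the equality ${\rm reg} = \deg h$ requires it and fails in general; the degree computation itself is elementary bookkeeping. Given that $I_G$ is prime and shown to be geometrically vertex decomposable, however, Cohen--Macaulayness is immediate from the cited structural result (or from normality), so no genuine difficulty arises.
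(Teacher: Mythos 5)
Your proposal is correct and follows essentially the same route as the paper: geometric vertex decomposability (Theorem~\ref{maintheorem}) gives Cohen--Macaulayness via \cite[Corollary 4.5]{KR2021}, then ${\rm reg}(R/I_G) = \deg h(R/I_G;z)$ for Cohen--Macaulay quotients reduces everything to the degree bookkeeping on Equation~\eqref{main formula}, which you carry out correctly (the paper leaves this last computation implicit). Your alternative Cohen--Macaulayness argument via the odd cycle condition and Hochster's theorem is a valid bonus, but not needed.
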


In the next section we recall the relevant background and facts on geometrically vertex decomposable ideals and 
$h$-polynomials. In Section~\ref{sec:mainresults}, we prove Theorem~\ref{maintheorem} and its consequences, Corollary~\ref{maincor} and Corollary~\ref{secondcor}.


\section{Geometrically vertex decomposable ideals and 
Hilbert series}
\label{sec:background}

Throughout this paper $\mathbb{K}$ will
denote a field. Let $R = \mathbb{K}[x_1,\ldots,x_n]$ be a polynomial
ring and fix a variable $y \in R$.  For any $f \in R$,
there is some integer $d \geq 0$
such that we can write $f$ as $f = \sum_{i=0}^d \alpha_iy^i$ where
$\alpha_i$ is a polynomial in the variables
$\{x_1,\ldots,x_n\} \setminus \{y\}$. The {\it initial
$y$-form} of $f$ is ${\rm in}_y(f) = \alpha_dy^d$.  
For any ideal $I$ of $R$,
the ideal ${\rm in}_y(I)  = \langle {\rm in}_y(f) ~|~
f \in I \rangle$ is the ideal generated by
all the initial $y$-forms of $f$ in the ideal $I$.
A monomial order $<$ is {\it $y$-compatible} if
${\rm in}_<(f) = {\rm in}_<({\rm in}_y(f))$ for all
$f \in R$.

Fixing a $y$-compatible monomial order $<$, suppose
that $\mathcal{G}(I) = \{g_1,\ldots,g_t\}$ is 
a Gr\"obner basis of $I$ with resect to $<$.
For each $g_i$ in the Gr\"obner basis, we write it
as $g_i = q_iy^{d_i} + r_i$ where ${\rm in}_y(g_i) = 
q_iy^{d_i}$.  Note that this means $y$ does not divide
any term of $q_i$ and $y^{d_i}$ does not divide
any term of $r_i$.   With this notation, we define
two new ideals:
\[\mathcal{C}_{y,I} = \langle q_1,\ldots,q_t \rangle
~~\mbox{and}~~ \mathcal{N}_{y,I} = \langle q_i ~|~ d_i = 0 \rangle. \]
We now recall the definition of
a geometrically vertex decomposable ideal.
\begin{defn}[{\cite[Definition 2.7]{KR2021}}]
\label{defn.gvd}
    An ideal $I$ of $R = \mathbb{K}[x_1,\ldots,x_n]$ is
    {\it geometrically vertex decomposable} (GVD) if 
    $I$ is unmixed (that is, all of the associated
    primes of $I$ have the same height) and 
    \begin{enumerate}
        \item $I = \langle 1 \rangle$, or $I$ is
        generated by a (possible empty) subset of
        variables, or
        \item there exists a variable $y$
        and $y$-compatible monomial order such
        that 
        $$ {\rm in}_y(I) = \mathcal{C}_{y,I} \cap (\mathcal{N}_{y,I} +
        \langle y \rangle)$$
        and the contractions of $\mathcal{C}_{y,I}$ and $\mathcal{N}_{y,I}$
        to $\mathbb{K}[x_1,\ldots,\hat{y},\ldots,x_n]$ 
        are geometrically vertex decomposable.
    \end{enumerate}
\end{defn}

The following lemma allows
us to conclude the equality ${\rm in}_y(I) = \mathcal{C}_{y,I} \cap \bigl( \mathcal{N}_{y,I} + \langle y \rangle \bigr)$ automatically holds as long as we preserve a suitably nice generating set.
\begin{lemma}\cite[Theorem 2.1]{KMY2009}\label{lem:sqfreelemma}
Let $I$ be an ideal of $R$, set $<$ to be a $y$-compatible monomial order, and assume $\{g_1, \ldots, g_t \}$ is a Gr\"obner basis of $I$ with respect to $<$. Suppose that ${\rm in}_y (g_i) = q_iy^{d_i}$ with $d_i = 0$ or $1$ for all i. Then
\begin{enumerate}
\item  $\{q_1, \ldots, q_t\}$ is a Gr\"obner basis of 
$\mathcal{C}_{y,I}$, $\{q_i ~|~ d_i = 0\}$ is a Gr\"obner basis of 
$\mathcal{N}_{y,I}$, and
\item  ${\rm in}_y(I) = \mathcal{C}_{y,I} \cap \bigl( \mathcal{N}_{y,I} + \langle y \rangle \bigr).$
\end{enumerate}
\end{lemma}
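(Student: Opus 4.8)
The plan is to reduce everything to understanding ${\rm in}_y(I)$ through one consequence of $y$-compatibility: the initial $y$-forms $\{{\rm in}_y(g_1),\ldots,{\rm in}_y(g_t)\} = \{q_1y^{d_1},\ldots,q_ty^{d_t}\}$ form a Gr\"obner basis of ${\rm in}_y(I)$, and in particular generate it. This is the standard fact that Gr\"obner bases are preserved under $y$-weight degeneration. Concretely, $y$-compatibility gives ${\rm in}_<(g_i) = {\rm in}_<({\rm in}_y(g_i)) = {\rm in}_<(q_i)y^{d_i}$, so $y$ does not divide ${\rm in}_<(q_i)$; then $\langle {\rm in}_<(q_iy^{d_i})\rangle = \langle {\rm in}_<(g_i)\rangle = {\rm in}_<(I) = {\rm in}_<({\rm in}_y(I))$, and since $\langle q_iy^{d_i}\rangle \subseteq {\rm in}_y(I)$ has the same initial ideal as ${\rm in}_y(I)$, the two coincide. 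I would record this first and use it throughout.

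For part (1) I would check Buchberger's criterion for $\{q_1,\ldots,q_t\}$ directly. A second use of $y$-compatibility is that $<$ refines the $y$-grading: a monomial of strictly larger $y$-degree is always $<$-larger, since ${\rm in}_<$ of a polynomial picks out its top $y$-degree part. The key computation is that, because ${\rm in}_<(q_iy^{d_i}) = {\rm in}_<(q_i)y^{d_i}$, one has $S(q_iy^{d_i},q_jy^{d_j}) = y^{\max(d_i,d_j)}S(q_i,q_j)$. Setting $m=\max(d_i,d_j)$ and taking a standard representation of the left side against the Gr\"obner basis $\{q_ky^{d_k}\}$, I would extract the top $y$-homogeneous component and cancel the common factor $y^{m}$; because $<$ orders by $y$-degree first, the resulting expression for $S(q_i,q_j)$ in terms of the $q_k$ still satisfies the leading-term bounds Buchberger's criterion requires. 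Hence $\{q_1,\ldots,q_t\}$ is a Gr\"obner basis of $\mathcal{C}_{y,I}$. For $\mathcal{N}_{y,I}$, when $d_i=d_j=0$ the identity reads $S(q_iy^{d_i},q_jy^{d_j})=S(q_i,q_j)$, which involves no $y$; since $y$ divides the leading term of every generator with $d_k=1$, no such generator can reduce a $y$-free remainder, so only the $d_k=0$ generators are used, proving $\{q_i \mid d_i=0\}$ is a Gr\"obner basis of $\mathcal{N}_{y,I}$.

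For part (2), the inclusion ${\rm in}_y(I)=\langle q_iy^{d_i}\rangle \subseteq \mathcal{C}_{y,I}\cap(\mathcal{N}_{y,I}+\langle y\rangle)$ is immediate on generators: if $d_i=0$ then $q_i\in\mathcal{N}_{y,I}\subseteq\mathcal{C}_{y,I}$, while if $d_i=1$ then $q_iy=y\,q_i$ lies in both $\mathcal{C}_{y,I}$ and $\langle y\rangle$. For the reverse inclusion I would invoke $\mathcal{N}_{y,I}\subseteq\mathcal{C}_{y,I}$ and the modular law to get
\[ \mathcal{C}_{y,I}\cap\bigl(\mathcal{N}_{y,I}+\langle y\rangle\bigr) = \mathcal{N}_{y,I}+\bigl(\mathcal{C}_{y,I}\cap\langle y\rangle\bigr). \]
Since $\mathcal{C}_{y,I}=\langle q_1,\ldots,q_t\rangle$ is generated by polynomials free of $y$, it is extended from $\mathbb{K}[x_1,\ldots,\hat{y},\ldots,x_n]$, so $y$ is a nonzerodivisor modulo $\mathcal{C}_{y,I}$ and $\mathcal{C}_{y,I}\cap\langle y\rangle = y\,\mathcal{C}_{y,I}$. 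The right side is therefore $\mathcal{N}_{y,I}+y\,\mathcal{C}_{y,I}$; using $y\,q_i\in\mathcal{N}_{y,I}$ when $d_i=0$ and $y\,q_i=q_iy^{d_i}\in{\rm in}_y(I)$ when $d_i=1$, this collapses to $\mathcal{N}_{y,I}+\langle q_iy\mid d_i=1\rangle={\rm in}_y(I)$.

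The main obstacle is the Gr\"obner bookkeeping in part (1): passing from a standard representation of $y^{m}S(q_i,q_j)$ against $\{q_ky^{d_k}\}$ to one of $S(q_i,q_j)$ against $\{q_k\}$ without violating the leading-term inequalities of Buchberger's criterion. What makes this work is precisely that $y$-compatibility forces $<$ to order by $y$-degree first, so extracting the top $y$-homogeneous component and cancelling $y^{m}$ is compatible with the order. By contrast, part (2) is purely formal once the generating set $\{q_iy^{d_i}\}$ of ${\rm in}_y(I)$ is in hand; there the only genuine hypothesis used is $d_i\le 1$, which guarantees $y\,\mathcal{C}_{y,I}\subseteq{\rm in}_y(I)$ and hence the exact collapse $\mathcal{N}_{y,I}+y\,\mathcal{C}_{y,I}={\rm in}_y(I)$.
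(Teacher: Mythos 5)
Your proof is correct, but there is nothing in the paper to compare it against: the paper imports Lemma~\ref{lem:sqfreelemma} verbatim from \cite[Theorem~2.1]{KMY2009} and uses it as a black box, so any proof is necessarily your own route. Judged on its own terms, your argument is a sound, essentially self-contained derivation, and it cleanly separates the two mechanisms at work. Everything hinges on your preliminary step---that $\{q_1y^{d_1},\ldots,q_ty^{d_t}\}$ is a Gr\"obner basis of ${\rm in}_y(I)$, equivalently ${\rm in}_<({\rm in}_y(I)) = {\rm in}_<(I)$---which is the standard fact about Gr\"obner bases under weight-order degenerations (Eisenbud, Theorem~15.17, or Sturmfels); this is the one load-bearing input you invoke rather than prove, and it is also the \emph{only} place $y$-compatibility is genuinely needed, so it deserves an explicit citation. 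Granting it, your part~(1) is a correct application of Buchberger's criterion in its ``standard representation'' form: the identity $S(q_iy^{d_i},q_jy^{d_j}) = y^{\max(d_i,d_j)}S(q_i,q_j)$ is right, and extracting the top $y$-homogeneous component of a standard representation of $y^{\max(d_i,d_j)}S(q_i,q_j)$ and cancelling $y^{\max(d_i,d_j)}$ does preserve the leading-term bounds---though this works for \emph{any} monomial order (leading monomials only increase when one passes from a $y$-homogeneous slice of $h_k$ to $h_k$ itself, and cancelling a monomial factor respects any order), not ``because $<$ orders by $y$-degree first''; that property of $y$-compatible orders is true but is not what makes this step go through. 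Your part~(2), via the modular law $\mathcal{C}_{y,I}\cap(\mathcal{N}_{y,I}+\langle y\rangle) = \mathcal{N}_{y,I}+(\mathcal{C}_{y,I}\cap\langle y\rangle)$ together with the observation that $y$ is a nonzerodivisor modulo $\mathcal{C}_{y,I}$ (so $\mathcal{C}_{y,I}\cap\langle y\rangle = y\,\mathcal{C}_{y,I}$), is clean, and you correctly isolate the only place the hypothesis $d_i\leq 1$ enters: it is exactly what puts $y\,\mathcal{C}_{y,I}$ inside ${\rm in}_y(I)$ and forces the collapse $\mathcal{N}_{y,I}+y\,\mathcal{C}_{y,I} = {\rm in}_y(I)$. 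What your write-up buys over the paper's citation is transparency about hypotheses: one sees that part~(1) holds without the assumption $d_i \leq 1$, while part~(2) genuinely requires it---a distinction that matters in the paper's own application, where checking $d_i \in \{0,1\}$ for each chosen variable is the step that licenses the geometric decomposition.
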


For our proof of Theorem~\ref{maintheorem}, we need that certain complete intersections are GVD.

\begin{lemma}[{\cite[Corollary 2.12]{CDSRVT2023}}]
\label{lem.CIGVD}
If $I$ is a squarefree monomial ideal that 
is also a complete intersection, then $I$ 
is geometrically vertex decomposable.
\end{lemma}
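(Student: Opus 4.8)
The plan is to induct on the total degree $d = \sum_i \deg(m_i)$ of a minimal monomial generating set $\{m_1,\dots,m_c\}$ of $I$, using the standard structural fact that a squarefree monomial ideal is a complete intersection precisely when its minimal generators have pairwise disjoint supports, i.e.\ each variable divides at most one $m_i$. Since any complete intersection is Cohen--Macaulay and hence unmixed, the unmixedness hypothesis of Definition~\ref{defn.gvd} will hold automatically at every stage of the recursion, so I only need to exhibit a valid geometric vertex decomposition at each step and verify that it stays within the class of squarefree complete intersections.

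For the base case, suppose $d = c$, so that every $m_i$ is a single variable. Then $I$ is generated by a subset of the variables and is GVD by part~(1) of Definition~\ref{defn.gvd}, which also covers the degenerate ideals $\langle 1 \rangle$ and the zero ideal (generated by the empty set of variables).

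For the inductive step, assume some generator, say $m_1$, has $\deg(m_1) \geq 2$, and choose a variable $y$ dividing $m_1$, writing $m_1 = y\, m_1'$ with $\deg(m_1') \geq 1$. Because each $m_i$ is a monomial, $\{m_1,\dots,m_c\}$ is a Gr\"obner basis with respect to any monomial order, and I may fix a $y$-compatible order (for instance one that compares the exponent of $y$ first). By disjointness of supports, $y$ divides no $m_i$ for $i \geq 2$, and it divides $m_1$ to the first power, so the initial $y$-forms are ${\rm in}_y(m_1) = m_1 = m_1' y$ and ${\rm in}_y(m_i) = m_i$ for $i \geq 2$; that is, $d_1 = 1$, $d_i = 0$ for $i \geq 2$, $q_1 = m_1'$, and $q_i = m_i$. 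Since every $d_i \in \{0,1\}$, Lemma~\ref{lem:sqfreelemma} immediately yields the identity ${\rm in}_y(I) = \mathcal{C}_{y,I} \cap (\mathcal{N}_{y,I} + \langle y \rangle)$. It then remains to identify the contraction ideals: by construction $\mathcal{C}_{y,I} = \langle m_1', m_2, \dots, m_c \rangle$ and $\mathcal{N}_{y,I} = \langle m_2, \dots, m_c \rangle$ in $\mathbb{K}[x_1,\dots,\hat y,\dots,x_n]$. Each is again a squarefree monomial ideal with pairwise disjoint supports---for $\mathcal{C}_{y,I}$ this uses $\operatorname{supp}(m_1') \subseteq \operatorname{supp}(m_1)$, which is disjoint from the supports of $m_2,\dots,m_c$---hence a squarefree complete intersection by the structural fact. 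Their total degrees $d-1$ and $d - \deg(m_1) \leq d-2$ are strictly smaller than $d$, so both are GVD by induction, completing the decomposition.

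I expect the only real subtlety to be bookkeeping: one must select $y$ from a generator of degree at least two so that $m_1' \neq 1$ and the recursion genuinely decreases $d$, and one must confirm that disjointness of supports---the defining feature of a squarefree complete intersection---is inherited by both $\mathcal{C}_{y,I}$ and $\mathcal{N}_{y,I}$ so that the induction remains inside the class. The Gr\"obner-basis and $y$-compatibility conditions, which are usually the delicate points in verifying a geometric vertex decomposition, are automatic here since we work entirely with monomials.
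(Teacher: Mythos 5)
Your proof is correct, and it is genuinely different from what the paper does: the paper offers no argument for this lemma at all, quoting it directly as \cite[Corollary~2.12]{CDSRVT2023}. Your induction is a valid, self-contained substitute, and each step checks out: a squarefree monomial ideal is a complete intersection exactly when its minimal generators have pairwise disjoint supports; a set of monomial generators is a Gr\"obner basis with respect to every monomial order, and an order comparing the exponent of $y$ first is $y$-compatible; since the supports are disjoint and $m_1$ is squarefree, $y$ divides only $m_1$ and only to the first power, so all $d_i\in\{0,1\}$ and Lemma~\ref{lem:sqfreelemma} yields both ${\rm in}_y(I)=\mathcal{C}_{y,I}\cap\bigl(\mathcal{N}_{y,I}+\langle y\rangle\bigr)$ and the identifications $\mathcal{C}_{y,I}=\langle m_1',m_2,\ldots,m_c\rangle$, $\mathcal{N}_{y,I}=\langle m_2,\ldots,m_c\rangle$; both contractions inherit disjointness of supports, hence are again squarefree complete intersections of strictly smaller total degree; and unmixedness holds at every stage because complete intersections are Cohen--Macaulay. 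Two small remarks. First, your ``base case $d=c$'' is better phrased as the dichotomy ``every minimal generator is a variable'' (which includes $c=0$ and is precisely case~(1) of Definition~\ref{defn.gvd}) versus ``some generator has degree at least two''; read as strong induction on $d$, the argument is airtight. Second, what your route buys is self-containedness: the lemma then rests only on Definition~\ref{defn.gvd} and Lemma~\ref{lem:sqfreelemma}, both already stated in this paper, so the external dependence on \cite{CDSRVT2023} for this ingredient could be dropped; the cost is roughly half a page of bookkeeping in place of a one-line citation, and your decomposition (peeling one variable off one generator at a time) closely mirrors the peeling of the variables $e_{s,2k+1}$ in the paper's proof of Theorem~\ref{maintheorem}, so it fits the style of the note well.
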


We now turn our attention to the Hilbert series and 
the $h$-polynomial. Equip $R = \KK[x_1,\ldots,x_n]$ with the standard grading and let $I$ be a homogeneous ideal of $R$. The \textit{Hilbert series} of $R/I$ is the formal power series $\textup{HS}(R/I;z) = \sum_{i\geq0} \dim_\mathbb{K}(R/I)_{i} z^i$,
where $\dim_\mathbb{K}(R/I)_i$ denotes the dimension of the $i$-th graded piece of $R/I$. The Hilbert-Serre Theorem (\cite[Theorem~5.1.4]{V} and \cite[Lemma 4.1.13]{BH}) guarantees that the Hilbert series can be written as a rational function
\[
\textup{HS}(R/I;z)=\frac{p(z)}{(1-z)^n}
\]
for some $p(z) \in \ZZ[z]$. Moreover, we may simplify this rational function and obtain the \textit{reduced Hilbert series}
 \[ \textup{HS}(R/I;z)=\frac{h(R/I;z)}{(1-z)^{\dim(R/I)}},
    \] where $h(R/I;z)$ is the unique polynomial in $\ZZ[z]$ satisfying $h(R/I;1) \neq 0$. This polynomial $h(R/I;z)=h_0+h_1z+\cdots +h_mz^m$ with $h_m \neq 0$ is called the {\it $h$-polynomial} of $R/I$.

The following fact about complete intersections
is well-known. 
\begin{lemma}
\label{lem:h-poly-CI}
   Suppose $I = \langle f_1,\ldots,f_r \rangle 
   \subseteq R$  
   is a complete intersection with $\deg f_i =d_i$ for each $i=1,\dots,r$.  Then 
   $$h(R/I;z) =\prod_{i=1}^r (1+z+ \cdots + z^{d_i-1}).
   $$ 
\end{lemma}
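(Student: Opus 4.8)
The plan is to prove this by induction on $r$, exploiting the fact that a complete intersection is generated by a regular sequence $f_1,\ldots,f_r$. The engine of the argument is the behaviour of the Hilbert series under quotienting by a single homogeneous non-zerodivisor. If $S$ is a standard graded $\KK$-algebra and $f \in S$ is a homogeneous non-zerodivisor of degree $d$, then multiplication by $f$ yields a short exact sequence of graded modules
$$0 \longrightarrow S(-d) \xrightarrow{\ \cdot f\ } S \longrightarrow S/fS \longrightarrow 0,$$
and since the Hilbert series is additive on short exact sequences (with the grading shift $S(-d)$ contributing a factor of $z^d$), one obtains
$$\textup{HS}(S/fS;z) = (1-z^d)\,\textup{HS}(S;z).$$

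First I would record the base case $\textup{HS}(R;z) = 1/(1-z)^n$ for $R = \KK[x_1,\ldots,x_n]$. Then, applying the displayed identity successively along the regular sequence $f_1,\ldots,f_r$---at each stage $f_{i+1}$ remains a non-zerodivisor on $R/\langle f_1,\ldots,f_i\rangle$ precisely because the sequence is regular---I would obtain
$$\textup{HS}(R/I;z) = \frac{\prod_{i=1}^r (1-z^{d_i})}{(1-z)^n}.$$

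To extract the $h$-polynomial, I would factor each numerator term as $1-z^{d_i} = (1-z)(1+z+\cdots+z^{d_i-1})$, cancelling $r$ copies of $(1-z)$ to get
$$\textup{HS}(R/I;z) = \frac{\prod_{i=1}^r (1+z+\cdots+z^{d_i-1})}{(1-z)^{n-r}}.$$
The final step is to confirm that this is already the reduced form. Since $I$ is a complete intersection generated by $r$ elements, $\dim(R/I) = n-r$, which matches the exponent in the denominator; and evaluating the numerator at $z=1$ gives $\prod_{i=1}^r d_i \neq 0$. By the uniqueness of the reduced Hilbert series, the numerator is therefore exactly $h(R/I;z)$, which is the claimed formula.

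The argument is essentially routine, so I do not expect a genuine obstacle; the only points requiring a little care are the bookkeeping that quotienting by a regular sequence keeps each successive generator a non-zerodivisor (which is precisely the definition of a regular sequence), and the dimension count $\dim(R/I) = n-r$ together with the non-vanishing of the numerator at $z=1$, which is what certifies that we have arrived at the reduced Hilbert series rather than an unreduced expression.
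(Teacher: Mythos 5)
Your proof is correct. One thing to note: the paper does not actually prove this lemma at all---it is stated as ``well-known'' and used as a black box---so there is no in-paper argument to compare against. What you have written is precisely the standard argument that the citation-free claim rests on: iterate the short exact sequence $0 \to S(-d_i) \xrightarrow{\cdot f_i} S \to S/f_iS \to 0$ along the regular sequence to get $\textup{HS}(R/I;z) = \prod_{i=1}^r(1-z^{d_i})/(1-z)^n$, factor each $1-z^{d_i}$ as $(1-z)(1+z+\cdots+z^{d_i-1})$, and then certify reducedness via $\dim(R/I)=n-r$ and the non-vanishing of the numerator at $z=1$. Your attention to that last step is exactly right, since the paper's definition of the $h$-polynomial is via the \emph{reduced} Hilbert series, so matching the denominator exponent to $\dim(R/I)$ and checking $h(1)\neq 0$ is what pins down the answer. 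The only hypothesis you use without comment is that the given generators $f_1,\ldots,f_r$ of a complete intersection of height $r$ themselves form a regular sequence; this is true but is a (standard) consequence of $R$ being Cohen--Macaulay, where height equals grade, rather than being literally the definition of complete intersection, so a one-line justification there would make the write-up airtight.
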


The next result links $h$-polynomials and geometrically vertex decomposable ideals.

\begin{theorem}[{\cite[Theorem~2.4]{NRVT2024gvdideals}}]
\label{thm:hpolygvd}
Let $I \subseteq R$ be a homogeneous GVD ideal and suppose that $y$ is
the variable such that there is a $y$-compatible
monomial order such that 
$\operatorname{in}_y(I) = \C_{y,I} \cap \bigl( \N_{y,I} + \langle y \rangle \bigr)$.
If $\C_{y,I} \neq \langle 1 \rangle$ and $\sqrt{\C_{y,I}} \neq \sqrt{\N_{y,I}}$, then
the $h$-polynomial of $R/I$ satisfies
\begin{equation*}
    h(R/I;z) = h(R/\N_{y,I};z) + z\cdot h(R/\C_{y,I};z).
\end{equation*}
\end{theorem}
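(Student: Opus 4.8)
The plan is to relate the Hilbert series of $R/I$ to those of $R/\C_{y,I}$ and $R/\N_{y,I}$ by first passing to the initial ideal and then applying a short exact sequence. Throughout, write $R' = \KK[x_1,\ldots,\hat{y},\ldots,x_n]$ and regard $\C_{y,I}$ and $\N_{y,I}$ both as ideals of $R'$ (they are generated by the $q_i$, which do not involve $y$) and as their extensions to $R$. The first step is the standard fact that a Gr\"obner degeneration preserves the Hilbert function: since $<$ is $y$-compatible we have $\operatorname{in}_<(I) = \operatorname{in}_<(\operatorname{in}_y(I))$, and since $\operatorname{in}_y$ sends a homogeneous polynomial to a homogeneous polynomial of the same degree, both $I$ and $\operatorname{in}_y(I)$ are homogeneous with
\[
\textup{HS}(R/I;z) = \textup{HS}(R/\operatorname{in}_<(I);z) = \textup{HS}(R/\operatorname{in}_y(I);z).
\]

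Second, I would feed the decomposition $\operatorname{in}_y(I) = \C_{y,I} \cap (\N_{y,I} + \langle y\rangle)$ into the short exact sequence $0 \to R/(A\cap B) \to R/A \oplus R/B \to R/(A+B) \to 0$ with $A = \C_{y,I}$ and $B = \N_{y,I}+\langle y\rangle$, which gives the additive relation $\textup{HS}(R/\operatorname{in}_y(I);z) = \textup{HS}(R/A;z) + \textup{HS}(R/B;z) - \textup{HS}(R/(A+B);z)$. Because $\N_{y,I} \subseteq \C_{y,I}$ (the generators of the latter include those of the former), we have $A + B = \C_{y,I} + \langle y\rangle$. Each of the three terms then simplifies: adjoining $y$ shows $\textup{HS}(R/\C_{y,I};z) = \tfrac{1}{1-z}\textup{HS}(R'/\C_{y,I};z)$, while killing $y$ shows $\textup{HS}(R/(\N_{y,I}+\langle y\rangle);z) = \textup{HS}(R'/\N_{y,I};z)$ and $\textup{HS}(R/(\C_{y,I}+\langle y\rangle);z) = \textup{HS}(R'/\C_{y,I};z)$. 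Combining,
\[
\textup{HS}(R/I;z) = \textup{HS}(R'/\N_{y,I};z) + \frac{z}{1-z}\,\textup{HS}(R'/\C_{y,I};z).
\]

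The third step is the dimension bookkeeping needed to convert this into an identity of $h$-polynomials, and this is where I expect the real work to lie. Writing $D = \dim R/I$, which by Step~1 also equals $\dim R/\operatorname{in}_y(I)$, I must show that the nondegeneracy hypotheses $\C_{y,I}\neq\langle 1\rangle$ and $\sqrt{\C_{y,I}}\neq\sqrt{\N_{y,I}}$, together with the unmixedness of $I$ built into the definition of GVD, force $\dim R'/\N_{y,I} = D$ and $\dim R'/\C_{y,I} = D-1$. The geometric content is that the minimal primes of $\C_{y,I}$ (extended to $R$) never contain $y$, whereas those of $\N_{y,I}+\langle y\rangle$ always do, so nondegeneracy is precisely the statement that both families genuinely occur as top-dimensional components of $\operatorname{in}_y(I)$; this is the structural input I would import from the Klein--Rajchgot theory of nondegenerate geometric vertex decompositions, which is the main obstacle to a fully self-contained argument.

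Granting these equalities, I would substitute the reduced Hilbert series $\textup{HS}(R'/\N_{y,I};z) = h(R/\N_{y,I};z)/(1-z)^{D}$ and $\textup{HS}(R'/\C_{y,I};z) = h(R/\C_{y,I};z)/(1-z)^{D-1}$ into the displayed identity; here the $h$-polynomials are unchanged whether computed over $R'$ or $R$, since adjoining a polynomial variable multiplies the Hilbert series by $1/(1-z)$ and raises the dimension by one. This yields
\[
\textup{HS}(R/I;z) = \frac{h(R/\N_{y,I};z) + z\,h(R/\C_{y,I};z)}{(1-z)^{D}}.
\]
Finally, since $h(R/\N_{y,I};1)$ and $h(R/\C_{y,I};1)$ are positive (each is a multiplicity), the numerator does not vanish at $z=1$ and hence is not divisible by $1-z$, so the displayed expression is already the reduced Hilbert series. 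Uniqueness of the $h$-polynomial then gives $h(R/I;z) = h(R/\N_{y,I};z) + z\,h(R/\C_{y,I};z)$, as desired.
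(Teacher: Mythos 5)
This theorem is stated in the paper with a citation to \cite{NRVT2024gvdideals} and is not proved internally, so the only meaningful comparison is with the cited source --- and your argument is essentially that proof: Gr\"obner degeneration preserving the Hilbert series via $y$-compatibility, the short exact sequence $0 \to R/(A\cap B) \to R/A \oplus R/B \to R/(A+B) \to 0$ applied to $A = \C_{y,I}$ and $B = \N_{y,I}+\langle y \rangle$ (using $\N_{y,I} \subseteq \C_{y,I}$ to get $A+B = \C_{y,I}+\langle y\rangle$), and the dimension bookkeeping $\dim R'/\N_{y,I} = \dim R/I$ and $\dim R'/\C_{y,I} = \dim R/I - 1$ forced by unmixedness and the nondegeneracy hypotheses. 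Your proposal is correct, and the one ingredient you flag as imported (the dimension facts from the Klein--Rajchgot theory of nondegenerate geometric vertex decompositions, \cite{KR2021}) is exactly the external input the original proof invokes at the same point.
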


\section{Main results}
\label{sec:mainresults}

In this section, we prove Theorem~\ref{maintheorem}. To that end, we introduce notation. Our graph $G = G(t_1, \ldots, t_s)$ consists of $s$ odd cycles $C_{2t_i+1}$ of size $2t_i + 1$ for $i = 1, \ldots s$. We use $e_{i,j}$ to denote the $j$-th edge of the $i$-th cycle $C_{2t_i+1}$ read clockwise starting from the central vertex; see Figure~\ref{fig.g(ki)}.

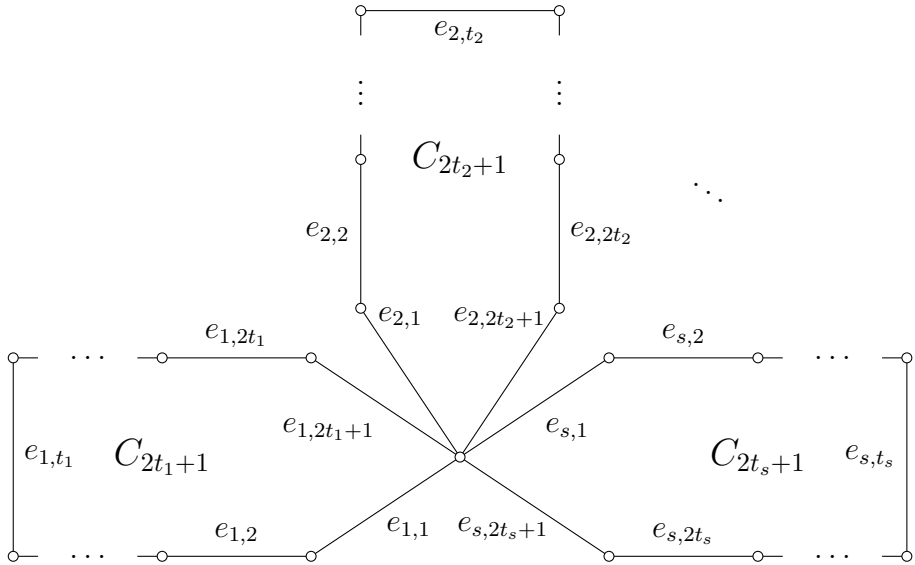
\begin{figure}[htp!]
\begin{tikzpicture}[scale=0.66]
\draw (9,3) -- (6,1) -- (2.5,1);
\node at (1.5,1) {$\ldots$};
\draw (0.5, 1) -- (0,1) -- (0,5) -- (0.5,5);
\node at (1.5,5) {$\ldots$};
\draw (2.5,5) -- (6,5) -- (9,3);
\draw (9,3) -- (7,6) -- (7,9.5);
\node at (7,10.5) {$\vdots$};
\draw (7, 11.5) -- (7,12) -- (11,12) -- (11,11.5);
\node at (11,10.5) {$\vdots$};
\draw (11,9.5) -- (11,6) -- (9,3);
\draw (9,3) -- (12,1) -- (15.5,1);
\node at (16.5,1) {$\ldots$};
\draw (17.5, 1) -- (18,1) -- (18,5) -- (17.5,5);
\node at (16.5,5) {$\ldots$};
\draw (15.5,5) -- (12,5) -- (9,3);

\fill[fill=white,draw=black] (9,3) circle (.1) {};
\fill[fill=white,draw=black] (6,1) circle (.1) {};
\fill[fill=white,draw=black] (3,1) circle (.1) {};
\fill[fill=white,draw=black] (0,1) circle (.1) {};
\fill[fill=white,draw=black] (0,5) circle (.1) {};
\fill[fill=white,draw=black] (3,5) circle (.1) {};
\fill[fill=white,draw=black] (6,5) circle (.1) {};
\fill[fill=white,draw=black] (7,6) circle (.1) {};
\fill[fill=white,draw=black] (7,9) circle (.1) {};
\fill[fill=white,draw=black] (7,12) circle (.1) {};
\fill[fill=white,draw=black] (11,6) circle (.1) {};
\fill[fill=white,draw=black] (11,9) circle (.1) {};
\fill[fill=white,draw=black] (11,12) circle (.1) {};
\fill[fill=white,draw=black] (18,1) circle (.1) {};
\fill[fill=white,draw=black] (15,1) circle (.1) {};
\fill[fill=white,draw=black] (12,1) circle (.1) {};
\fill[fill=white,draw=black] (12,5) circle (.1) {};
\fill[fill=white,draw=black] (15,5) circle (.1) {};
\fill[fill=white,draw=black] (18,5) circle (.1) {};

\node at (3,3) {{\large $C_{2t_1+1}$}};
\node at (9,9) {{\large $C_{2t_2+1}$}};
\node at (14,8.5) {$\ddots$};
\node at (15,3) {{\large $C_{2t_s+1}$}};
\node[below right] at (7.3,2) {$e_{1,1}$};
\node[above] at (4.5,1) {$e_{1,2}$};
\node[right] at (0,3) {$e_{1,t_1}$};
\node[above] at (4.5,5) {$e_{1,2t_1}$};
\node[below left] at (7.5,4) {$e_{1,2t_1+1}$};
\node at (7.8,5.8) {$e_{2,1}$};
\node at (9.8,5.8) {$e_{2,2t_2+1}$};
\node[left] at (7,7.5) {$e_{2,2}$};
\node[below] at (9,12) {$e_{2,t_2}$};
\node[right] at (11,7.5) {$e_{2,2t_2}$};
\node[below left] at (11,2) {$e_{s,2t_s+1}$};
\node[above] at (13.5,1) {$e_{s,2t_s}$};
\node[left] at (18,3) {$e_{s,t_s}$};
\node[above] at (13.5,5) {$e_{s,2}$};
\node[below right] at (10.5,4) {$e_{s,1}$};
\end{tikzpicture}
    \caption{The graph $G = G(t_1,t_2,\ldots,t_s)$ and the notation for its edges.}
    \label{fig.g(ki)}
\end{figure}

\begin{remark}
    Note that our notation for the graph $G(t_1,\ldots,t_s)$
    is different than that found in \cite{BHSD2023edgerings}.
    In that paper the authors write $G(r_1,\ldots,r_t)$
    where $r_i$ is the number of odd-cycles of length
    $2i+1$.  We also want to highlight that we make
    no assumptions on the order of $t_1,\ldots,t_s$.
    However, it should be clear that for any
    permutation $\sigma \in S_s$, the graphs
    $G(t_1,\ldots,t_s)$ and $G(t_{\sigma(1)},\ldots, t_{\sigma(s)})$ are isomorphic graphs.
\end{remark}

With this notation, we can express the universal Gr\"obner basis of the toric ideal $I_G$.  

\begin{lemma}
\label{lemma:univGB}
If $G = G(t_1,\ldots,t_s)$, then the
set 
\begin{equation*}
\left\{ \Bigl( \prod_{i=0}^{t_p} e_{p,2i+1} \Bigr) \Bigl( \prod_{j=1}^{t_q} e_{q,2j} \Bigr) - \Bigl( \prod_{k=0}^{t_q} e_{q,2k+1} \Bigr) \Bigl( \prod_{\ell=1}^{t_p} e_{p,2\ell} \Bigr) \middle\vert 1 \leq p < q \leq s \right\}
\end{equation*}
is a universal Gr\"obner basis 
of $I_G$.
\end{lemma}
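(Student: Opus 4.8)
The plan is to recognize the displayed set as the Graver basis of $I_G$ and then appeal to the classical fact that the Graver basis of a toric ideal is automatically a universal Gr\"obner basis. Recall that every binomial of $I_G$ has the form $B_w = \prod_{i\text{ odd}} e_i - \prod_{i\text{ even}} e_i$ for an even closed walk $w = (e_1,\ldots,e_{2k})$ of $G$, and that the Graver basis is exactly the collection of $B_w$ as $w$ ranges over the \emph{primitive} even closed walks. By the classification of primitive even closed walks due to Ohsugi and Hibi, every such $w$ is one of three types: (i) an even cycle; (ii) two odd cycles meeting in a single vertex; or (iii) two vertex-disjoint odd cycles connected by two paths. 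So the problem reduces to listing the walks of these three types in $G = G(t_1,\ldots,t_s)$.

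First I would exploit that the center $x_0$ is the only shared vertex of $G$. Deleting $x_0$ breaks $G$ into $s$ disjoint paths, so any cycle of $G$ meets $x_0$ at most once and therefore lies in a single arm; hence the cycles of $G$ are precisely the $s$ odd cycles $C_{2t_i+1}$. This rules out type (i), since $G$ contains no even cycle, and rules out type (iii), since no two of the odd cycles are vertex-disjoint. The only surviving configurations are the type-(ii) walks: for each pair $p < q$, the cycles $C_{2t_p+1}$ and $C_{2t_q+1}$ share exactly $x_0$ and so contribute one primitive walk. I expect this structural step---confirming via the single-cut-vertex geometry that no even cycles and no disjoint odd-cycle pairs exist, and then invoking the Ohsugi--Hibi classification to conclude that these are the \emph{only} primitive shapes---to be the main point requiring care.

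It then remains to match the walk $w$ that traverses $C_{2t_p+1}$ and then $C_{2t_q+1}$ with the stated binomial, which is routine bookkeeping. Reading the first cycle places $e_{p,m}$ in position $m$, so its odd-indexed edges occupy odd positions and its even-indexed edges occupy even positions; since the first cycle has odd length $2t_p+1$, the position parities reverse along the second cycle, sending the odd-indexed edges of $C_{2t_q+1}$ to even positions and its even-indexed edges to odd positions. Gathering the odd-position edges into one monomial and the even-position edges into the other yields exactly $\bigl(\prod_{i=0}^{t_p} e_{p,2i+1}\bigr)\bigl(\prod_{j=1}^{t_q} e_{q,2j}\bigr) - \bigl(\prod_{k=0}^{t_q} e_{q,2k+1}\bigr)\bigl(\prod_{\ell=1}^{t_p} e_{p,2\ell}\bigr)$, and applying $\varphi$ shows both terms equal $x_0^2 \prod_j x_{p,j}\prod_k x_{q,k}$, confirming $B_w \in I_G$. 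Therefore the displayed set is the Graver basis of $I_G$, and hence a universal Gr\"obner basis.
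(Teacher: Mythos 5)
Your proposal is correct, and its skeleton is the same as the paper's: both arguments rest on the fact that the binomials attached to primitive even closed walks form a universal Gr\"obner (equivalently, Graver) basis of a toric ideal of a graph (the paper cites \cite[Proposition~10.1.10]{V} for this), and both then reduce the lemma to classifying the primitive walks of $G(t_1,\ldots,t_s)$. Where you genuinely differ is in how that classification is obtained: the paper's sketch simply cites \cite{BHSD2023edgerings} for the statement that every primitive closed even walk is a pair of odd cycles glued at the center, whereas you derive it from the general Ohsugi--Hibi structure theorem together with the cut-vertex geometry of $G$ (deleting the center leaves a disjoint union of paths, so the only cycles of $G$ are the $s$ odd ones, ruling out even cycles and vertex-disjoint odd pairs). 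This makes your proof self-contained at exactly the step the paper outsources. Two small points of care: the Ohsugi--Hibi list is a \emph{necessary} condition for primitivity (the complete characterization of primitive walks is due to Tatakis--Thoma), but a necessary condition is all you need, since any subset of $I_G$ that contains a universal Gr\"obner basis is itself one; for the same reason, your closing claim that the displayed set \emph{equals} the Graver basis tacitly uses the (true, standard, but unproved in your sketch) converse that every two-odd-cycles-at-a-vertex walk is primitive---the lemma as stated only needs the containment you established. Your parity bookkeeping and the verification via $\varphi$ that each binomial lies in $I_G$ are both correct and match the paper's intended correspondence.
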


\begin{proof}[Sketch of the proof] By {\cite[Proposition 10.1.10]{V}}
the set of primitive closed even walks in a graph $G$
are used to define the binomials that form a universal Gr\"obner basis of $I_G$.  As shown in \cite{BHSD2023edgerings}, all the primitive closed
even walks of $G(t_1,\ldots,t_s)$ have the form
\[(e_{p,i},e_{p,2},\ldots,e_{p,2t_p+1},e_{q,1},\ldots, 
e_{q,2t_q+1})\] for some $p < q$.  
Informally, the primitive 
closed even walk involves starting at the center vertex,
traveling around the odd cycle $C_{2t_p+1}$, returning
to the center, and then traveling around the 
cycle $C_{2t_q+1}$, and  back to the center.   This primitive
closed even walk then corresponds to the
binomial in the universal Gr\"obner basis.
\end{proof}

Before presenting our proof of Theorem~\ref{maincor},
we provide  an example to demonstrate the structure of the argument. Specifically, we show the toric ideal of the graph $G(2,1,2)$ is geometrically vertex decomposable.

\begin{example}
\label{ex:g(2,1,2)}
The graph $G(2,1,2)$ is pictured in Figure~\ref{fig.g(2,1,2)}. Using Lemma~\ref{lemma:univGB}, the universal Gr\"obner basis for $I\coloneq I_{G(2,1,2)}$ is
\begin{align*}
    I_{G(2,1,2)} = \bigl \langle &e_{1,1} e_{1,3} e_{1,5} e_{2,2} - e_{1,2} e_{1,4} e_{2,1} e_{2,3}, \\ &e_{1,1} e_{1,3} e_{1,5} e_{3,2} e_{3,4} - e_{1,2} e_{1,4} e_{3,1} e_{3,3} e_{3,5}, \\ &e_{2,1} e_{2,3} e_{3,2} e_{3,4} - e_{2,2} e_{3,1} e_{3,3} e_{3,5} \bigr \rangle.
\end{align*}
To show that this ideal is GVD, we use an induction on the number of cycles of $G$. Fix a lexicographic monomial order where the variables corresponding to odd edges in the last cycle are of highest weight, i.e. $e_{3,5} > e_{3,3} > e_{3,1} > \cdots$. This order is chosen so that it is always $y$-compatible---it is precisely the order in which we select $y$. Indeed, we choose the term $y = e_{3,5}$ and compute the $\C$ and $\N$ ideals. To ease the notation, let $f_1$ denote the first generator of $I$. We explicitly compute that
\begin{align*}
    \C_0 \coloneqq \C_{e_{3,5}, I_{G(2,1,2)}} = \bigl \langle e_{2,2} e_{3,1} e_{3,3}, e_{1,2} e_{1,4} e_{3,1} e_{3,3}, f_1 \bigr \rangle && \N \coloneqq \N_{e_{3,5}, I_{G(2,1,2)}} = \bigl \langle f_1 \bigr \rangle.
\end{align*}
Each element $g_i$ in the Gr\"obner
basis of $I$  satisfies ${\rm in}_{e_{3,5}}(g_i) = e_{3,5}^{d_i}q_i$ with 
$d_i = 0$ or $1$.  Thus, by Lemma \ref{lem:sqfreelemma}, we have
${\rm in}_y(I) = \mathcal{C}_0 \cap \bigl( \mathcal{N} + \langle y \rangle \bigr)$,
and the listed generators of $\mathcal{C}_0$ and $\mathcal{N}$ are  Gr\"obner bases. Observe that $\N_{e_{3,5}, I}$ is just the toric ideal of the graph $G(2,1)$. By induction, we may assume that this ideal is GVD, so we just need to see that $\mathcal{C}_0$ is GVD. To do so, we now choose the term $y = e_{3,3}$ and compute again:
\begin{align*}
    \C_1 \coloneqq \C_{e_{3,3}, \C_0} = \bigl \langle e_{2,2} e_{3,1}, e_{1,2} e_{1,4} e_{3,1}, f_1 \bigr \rangle, && \N_{e_{3,3}, \C_0} = \N.
\end{align*}
We can apply Lemma \ref{lem:sqfreelemma} again to get
${\rm in}_y(\mathcal{C}_0) = \mathcal{C}_1 \cap \bigl( \mathcal{N} + \langle y \rangle \bigr)$,
and the listed generators of $\mathcal{C}_1$ form a Gr\"obner basis.
We automatically have that $\mathcal{N}$ is GVD by induction, so we just need $\mathcal{C}_1$ to be GVD. Finally, choosing the term $y = e_{3,1}$, we obtain
\begin{align*}
    \C_2 \coloneqq \C_{e_{3,1}, \C_1} = \bigl \langle e_{2,2}, e_{1,2} e_{1,4} \bigr \rangle, && \N_{e_{3,1}, \C_1} = \N.
\end{align*}
The required intersection again holds by Lemma \ref{lem:sqfreelemma}.
Observe that $f_1$ still lies in $\C_2$ but is now a redundant generator. Moreover, we see that $\C_2$ is a squarefree monomial complete intersection and hence is GVD by Lemma~\ref{lem.CIGVD}. This implies that all of the $\C_j$ are GVD and consequently the toric ideal $I_{G(2,1,2)}$ is GVD.

The example outlines the general framework of the argument, which we also illustrate with a diagram in Figure~\ref{fig.argument}. We induct on the number of cycles and the number of odd-indexed edges per cycle. For each odd-indexed edge, we split into the $\C$ and $\N$ ideals, the latter of which is always GVD by induction. The $\C$ ideals are GVD because the last step produces a squarefree monomial complete intersection, which is GVD by Lemma~\ref{lem.CIGVD}. We proceed with the precise formulation of this idea.
\end{example}

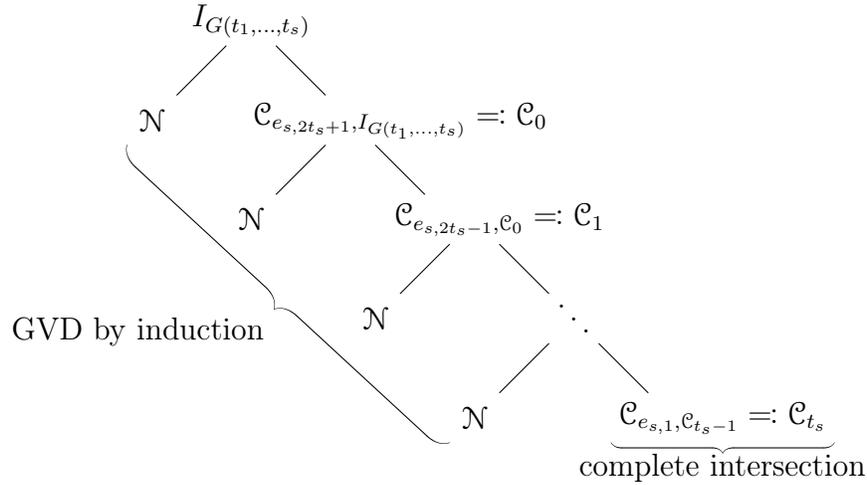
\begin{figure}[htp!]
\begin{tikzpicture}[scale=0.66]
\node at (0,0) {$I_{G(t_1,\ldots,t_s)}$};
\node at (3,-2) {$\C_{e_{s,2t_s+1}, I_{G(t_1,\ldots,t_s)}} \eqcolon \C_0$};
\node at (5,-4) {$\C_{e_{s,2t_s-1}, \C_0} \eqcolon \C_1$};
\node at (6.25,-5.75) {.};
\node at (6.5,-6) {.};
\node at (6.75,-6.25) {.};
\node at (9.5,-8) {$\C_{e_{s,1}, \C_{t_{s}-1}}\eqcolon \C_{t_s}$};
\draw (0.5,-0.5) -- (1.5, -1.5);
\draw (2.5,-2.5) -- (3.5, -3.5);
\draw (5,-4.5) -- (6, -5.5);
\draw (7,-6.5) -- (8, -7.5);
\node at (-2,-2) {$\N$};
\node at (0,-4) {$\N$};
\node at (2.5,-6) {$\N$};
\node at (4.5,-8) {$\N$};
\draw (-0.5,-0.5) -- (-1.5, -1.5);
\draw (1.5,-2.5) -- (0.5, -3.5);
\draw (4,-4.5) -- (3, -5.5);
\draw (6,-6.5) -- (5, -7.5);
\draw[decoration={calligraphic brace,mirror,amplitude=8pt},decorate]
  (-2.5,-2.5) -- node[below left=5pt] {GVD by induction} (4,-8.5);
  \draw[decoration={calligraphic brace,mirror,amplitude=4pt},decorate]
  (7.25,-8.5) -- node[below] {complete intersection} (11.75,-8.5);
\end{tikzpicture}
    \caption{Diagram summarizing the argument for why $I_{G(t_1,\ldots,t_s)}$ is GVD.}
    \label{fig.argument}
\end{figure}

\begin{proof}[Proof of Theorem~\ref{maintheorem}] 
We perform induction on the number of cycles $s$. The base case is $s = 1$, in which case the toric ideal is trivial and hence GVD.

Assume $s \geq 2$ and let $G = G(t_1, t_2, \ldots, t_s)$. Lemma~\ref{lemma:univGB} gives us that the toric ideal of $G$ is
\begin{equation}
\label{eqn:Igens}
    I \coloneqq I_{G(t_1,\ldots,t_s)} = \left\langle \Bigl( \prod_{i=0}^{t_p} e_{p,2i+1} \Bigr) \Bigl( \prod_{j=1}^{t_q} e_{q,2j} \Bigr) - \Bigl( \prod_{k=0}^{t_q} e_{q,2k+1} \Bigr) \Bigl( \prod_{\ell=1}^{t_p} e_{p,2\ell} \Bigr) \middle\vert 1 \leq p < q \leq s \right\rangle.
\end{equation}
Observe that $I_G$ is a prime ideal and hence unmixed. Fix the lexicographic monomial order with the variables ordered by
\begin{align*}
e_{s,2t_s+1} > e_{s,2t_s-1} > \cdots > e_{s,3} > e_{s, 1} > \text{remaining variables in any order}.
\end{align*}
By construction, this monomial order will always be $y$-compatible. Initialize the ideals
\begin{align}
\label{eqn:C-N-init}
    \C_0 \coloneqq \C_{e_{s,2t_s+1},I}, && \N_0 \coloneqq \N_{e_{s,2t_s+1},I},
\end{align}
and for each $1 \leq i \leq t_s$, construct the ideals
\begin{align}
\label{eqn:C-N-rec}
    \C_i \coloneqq \C_{e_{s,2(t_s-i)+1},\C_{i-1}}, && \N_i \coloneqq \N_{e_{s,2(t_s-i) + 1},\C_{i-1}}.
\end{align}
We explicitly compute the generators of these ideals, starting with $\C_0$ and $\N_0$. First, $\N_0$ is given by (c.f. Equation~\eqref{eqn:Igens})
\begin{equation*}
    \N_0 = \left\langle \Bigl( \prod_{i=0}^{t_p} e_{p,2i+1} \Bigr) \Bigl( \prod_{j=1}^{t_q} e_{q,2j} \Bigr) - \Bigl( \prod_{k=0}^{t_q} e_{q,2k+1} \Bigr) \Bigl( \prod_{\ell=1}^{t_p} e_{p,2\ell} \Bigr) \middle\vert 1 \leq p < q \leq s-1 \right\rangle = I_{G(t_1,t_2\ldots,t_{s-1})}
\end{equation*}
since any generator involving the last cycle $C_{2t_s+1}$ would have a term containing $e_{s,2t_s+1}$. It follows that
\begin{align*}
    \C_0 = \left\langle \Bigl( \prod_{k=0}^{t_s-1} e_{s,2k+1} \Bigr) \Bigl( \prod_{\ell=1}^{t_p} e_{p,2\ell} \Bigr) \middle\vert 1 \leq p \leq s-1 \right\rangle + \N_0.
\end{align*}
The same argument reveals that $\N_i = \N_0 = I_{G(t_1,t_2\ldots,t_{s-1})}$ for all $1 \leq i \leq t_s$. On the other hand, we have that
\begin{equation}
\label{eqn:Ci}
    \C_i = \left\langle \Bigl( \prod_{k=0}^{(t_s-i)-1} e_{s,2k+1} \Bigr) \Bigl( \prod_{\ell=1}^{t_p} e_{p,2\ell} \Bigr) \middle\vert 1 \leq p \leq s-1 \right\rangle + \N_0,
\end{equation}
since we are pulling out a single copy of the variable $e_{s,2(t_s - i)+1}$ at each step.  We verify that we satisfy the assumptions of Definition~\ref{defn.gvd} for being GVD. Observe that the generators in Equation~\eqref{eqn:Ci} satisfy ${\rm in}_{e_{s,2(t_s-i)+1}}(g) = (e_{s,2(t_s-i)+1})^dq$ with $d =0$ or 
$1$, and we have
${\rm in}_{e_{s,2t_s+1}}(I) = 
\mathcal{C}_{0} \cap \bigr( \mathcal{N}_{0} + \langle e_{s,2t_s+1}\rangle \bigl)$ by Lemma \ref{lem:sqfreelemma}. Proceeding by induction, we may assume that the generating set for $\mathcal{C}_{i-1}$ is a Gr\"obner basis. Therefore by Lemma \ref{lem:sqfreelemma}, we have that \eqref{eqn:Ci} forms a Gr\"obner basis for $\mathcal{C}_i$ and 
\[{\rm in}_{e_{s,2(t_s-i)+1}}(\mathcal{C}_{i-1}) = 
\mathcal{C}_{i} \cap \bigr( \mathcal{N}_{i} + \langle e_{s,2(t_s-i)+1}\rangle \bigl).\]

It remains to show that $\mathcal{N}_0$ and each $\mathcal{C}_i$ are GVD. The ideal $\mathcal{N}_0$, being the toric ideal of $G(t_1, t_2, \ldots, t_{s-1})$, is GVD by induction. On the other hand, when $i = t_s$, Equation~\ref{eqn:Ci} becomes
\begin{equation}
\label{eqn:Cts}
    \C_{t_s} = \left\langle \Bigl( \prod_{\ell=1}^{t_p} e_{p,2\ell} \Bigr) \middle\vert 1 \leq p \leq s-1 \right\rangle.
\end{equation}
Note that this ideal still contains $\N_0$, since both terms of every generator of $\N_0$ are divisible by a generator of
$\C_{t_s}$. Moreover, we see that $\C_{t_s}$ is a squarefree monomial ideal whose generators are all coprime, and hence form a regular sequence. 
Therefore $\C_{t_s}$ is a complete intersection and Lemma~\ref{lem.CIGVD} gives us that $\C_{t_s}$ is GVD. It follows that all of the $\C_i$ are GVD for $0 \leq i \leq t_s$ and hence so is $I$.
\end{proof}

We recover the results
of Bhaskara, Higashitani, and Shibu Deepthi from \cite{BHSD2023edgerings,HD2023}.

\begin{proof}[Proof of Corollary~\ref{maincor}]
We once again proceed by induction on the number of cycles $s$. For the base case $s=1$, the toric ideal is trivial, so $h \bigl( \KK[G(t_1)] ; z\bigr)=1$, which agrees with the right hand side of Equation~\eqref{main formula}.

Now let $s \geq 2$ and $G = G(t_1, t_2, \ldots, t_s)$. Let $\C_i$ and $\N_i$ be as defined in Equations~\eqref{eqn:C-N-init} and \eqref{eqn:C-N-rec}. As was observed in the proof, we have that all of the $\N_i$ are equal and so we denote $\N \coloneq \N_0$. Using Theorem~\ref{thm:hpolygvd}, we see that for each $0 \leq i \leq t_s - 1$, the $h$-polynomial of $\C_i$ is
\begin{equation*}
    h(R/\C_i ; z) = h(R/\N ; z) + z \cdot h(R/\C_{i+1} ; z).
\end{equation*}
It follows that the $h$-polynomial of $I_G$ is (c.f. Figure~\ref{fig.argument})
\begin{equation*}
    h(R/I_G ; z) = (1 + z + z^2 + \cdots + z^{t_s}) h(R/\N ; z) + z^{t_s+1} \cdot h(R/\C_{t_s} ; z).
\end{equation*}
Induction gives us that
\begin{equation*}
    h(R/\N ; z) = \prod_{i=1}^{s-1} (1 + z + \cdots + z^{t_i}) - z \prod_{i=1}^{s-1} (1 + z + \cdots + z^{t_i - 1}).
\end{equation*}
On the other hand, the ideal 
$\C_{t_s}$ of Equation~\eqref{eqn:Cts}
is a complete intersection, so 
Lemma~\ref{lem:h-poly-CI}  yields
\begin{equation*}
    h(R/\C_{t_s} ; z) = \prod_{i=1}^{s-1} (1 + z + \cdots + z^{t_i-1}).
\end{equation*}
Combining these, we see that
\begin{align*}
    h(R/I_G ; z) &= \left[ \prod_{i=1}^{s-1} (1 + z + \cdots + z^{t_i}) - z \prod_{i=1}^{s-1} (1 + z + \cdots + z^{t_i - 1}) \right] (1 + z + \cdots + z^{t_s}) \\ 
    &+ z^{t_s+1} \prod_{i=1}^{s-1} (1 + z + \cdots + z^{t_i - 1}). 
\end{align*}
Now observe that the middle term may be rewritten as
\begin{align*}
    \bigl( [1+z+\cdots+z^{t_s-1}] + z^{t_s} \bigr) z \prod_{i = 1}^{s-1} (1 + z + \cdots + z^{t_i - 1}) &= z \prod_{i=1}^s (1 + z + \cdots + z^{t_i - 1}) \\
    &+ z^{t_s+1} \prod_{i=1}^{s-1} (1 + z + \cdots + z^{t_i - 1}).
\end{align*}
Consequently we have cancellation between the middle and last terms, and we are left with
\begin{equation*}
    h(R/I_G ; z) = \prod_{i=1}^s (1 + z + \cdots + z^{t_i}) - z \prod_{i=1}^s (1 + z + \cdots + z^{t_i-1}). \qedhere
\end{equation*}
\end{proof}

Finally, we compute the Castelnuovo--Mumford regularity of $\mathbb{K}[G(t_1,\ldots,t_s)]$.
Recall that for any homogeneous ideal
$I$ of $R$, the {\it Castelnuovo-Mumford
regularity} of $R/I$ is
$${\rm reg}(R/I) = \max\{j-i ~|~
\beta_{i,j}(R/I) \neq 0\}$$
where $\beta_{i,j}(R/I)$ denotes the $(i,j)$-th 
graded Betti number that appears in the
minimal graded free resolution of $R/I$. 

\begin{proof}[Proof of Corollary~\ref{secondcor}]
Because the ideal $I_G$ with
$G = G(t_1,\ldots,t_s)$ is
geometrically vertex decomposable by Theorem~\ref{maintheorem},
the ideal $I_G$ is also Cohen--Macaulay by \cite[Corollary 4.5]{KR2021}.
For any Cohen--Macaulay ideal $I$,
it is well-known (see,
for example, \cite[Corollary B.28]{Vas1998}) that
${\rm reg}(R/I) = \deg h(R/I;z).$
The result now follows immediately
from Corollary \ref{maincor}.
\end{proof}

\subsection*{Acknowledgments}

Part of the work on this project was
carried out at the Fields Institute in Toronto, Canada
as part of the ``Thematic Program on Commutative Algebra
and Applications''.   All of the authors thank
Fields for providing an wonderful environment in which to work. Additionally, Van Tuyl
and Zotine thank the Fields Institute for 
funding to participate in this program.
Van Tuyl’s research is supported by NSERC Discovery Grant 2024-05299.

\bibliographystyle{plain}
\bibliography{references.bib}

\begin{thebibliography}{10}

\bibitem{ADS}
Ayah Almousa, Anton Dochtermann, and Ben Smith.
\newblock Root polytopes, tropical types, and toric edge ideals.
\newblock {\em Algebr. Comb.}, 8(1):59--99, 2025.

\bibitem{BHSD2023edgerings}
Kieran Bhaskara, Akihiro Higashitani, and Nayana Shibu~Deepthi.
\newblock The $h$-vectors of edge rings of odd cycle compositions.
\newblock {\em J. Algebra Appl.}, 2023.
\newblock paper no. 2550362, 2025.

\bibitem{BOKVT2017}
Jennifer Biermann, Augustine O'Keefe, and Adam Van~Tuyl.
\newblock Bounds on the regularity of toric ideals of graphs.
\newblock {\em Adv. in Appl. Math.}, 85:84--102, 2017.

\bibitem{BH}
Winfried Bruns and J\"{u}rgen Herzog.
\newblock {\em Cohen--{M}acaulay rings}, volume~39 of {\em Cambridge Studies in Advanced Mathematics}.
\newblock Cambridge University Press, Cambridge, 1993.

\bibitem{CDSRVT2023}
Mike Cummings, Sergio Da~Silva, Jenna Rajchgot, and Adam Van~Tuyl.
\newblock Geometric vertex decomposition and liaison for toric ideals of graphs.
\newblock {\em Algebr. Comb.}, 6(4):965--997, 2023.

\bibitem{DH2023}
Sergio Da~Silva and Megumi Harada.
\newblock Geometric {Vertex} {Decomposition}, {Gr{\"o}bner bases}, and {Frobenius} {Splittings} for {Regular} {Nilpotent} {Hessenberg} {Varieties}.
\newblock {\em Transformation Groups}, pages 1--36, 2023.

\bibitem{HHBOK2019}
Huy~T\`ai H\`a, Selvi~Kara Beyarslan, and Augustine O'Keefe.
\newblock Algebraic properties of toric rings of graphs.
\newblock {\em Comm. Algebra}, 47(1):1--16, 2019.

\bibitem{HHKOK2011}
Takayuki Hibi, Akihiro Higashitani, Kyouko Kimura, and Augustine~B. O'Keefe.
\newblock Depth of edge rings arising from finite graphs.
\newblock {\em Proc. Amer. Math. Soc.}, 139(11):3807--3813, 2011.

\bibitem{HD2023}
Akihiro Higashitani and Nayana~Shibu Deepthi.
\newblock The {$h$}-vectors of the edge rings of a special family of graphs.
\newblock {\em Comm. Algebra}, 51(12):5287--5296, 2023.

\bibitem{KR2021}
Patricia Klein and Jenna Rajchgot.
\newblock Geometric vertex decomposition and liaison.
\newblock {\em Forum Math. Sigma}, 9:Paper No. e70, 23, 2021.

\bibitem{KMY2009}
Allen Knutson, Ezra Miller, and Alexander Yong.
\newblock Gr\"obner geometry of vertex decompositions and of flagged tableaux.
\newblock {\em J. Reine Angew. Math.}, 630:1--31, 2009.

\bibitem{NRVT2024gvdideals}
Th{\'a}i~Th{\`a}nh Nguy{\~{\^e}}n, Jenna Rajchgot, and Adam Van~Tuyl.
\newblock Three invariants of geometrically vertex decomposable ideals.
\newblock {\em Pacific J. Math.}, 333(2):357--390, 2024.

\bibitem{Vas1998}
Wolmer~V. Vasconcelos.
\newblock {\em Computational methods in commutative algebra and algebraic geometry}, volume~2 of {\em Algorithms and Computation in Mathematics}.
\newblock Springer-Verlag, Berlin, 1998.
\newblock With chapters by David Eisenbud, Daniel R. Grayson, J\"urgen Herzog and Michael Stillman.

\bibitem{V}
Rafael~H. Villarreal.
\newblock {\em Monomial algebras}.
\newblock Monographs and Research Notes in Mathematics. CRC Press, Boca Raton, FL, second edition, 2015.

\end{thebibliography}

\end{document}